\newcommand{\googlebooks}[1]{(preview at \href{https://books.google.com/books?id=#1}{google books})}
\newcommand{\numdam}[1]{}
\theoremstyle{plain}
\newtheorem{proposition}{Proposition}
\newtheorem{prop}[proposition]{Proposition}
\newtheorem{lemma}[proposition]{Lemma}
\newtheorem{cor}[proposition]{Corollary}
\numberwithin{equation}{section}
\newtheorem{example}[proposition]{Example}
\newtheorem{remark}[proposition]{Remark}
\newtheorem{defi}[proposition]{Definition}    
\newtheorem*{thmnono}{Theorem}
\tikzstyle directed=[postaction={decorate,decoration={markings,
    mark=at position #1 with {\arrow{>}}}}]
\tikzstyle rdirected=[postaction={decorate,decoration={markings,
    mark=at position #1 with {\arrow{<}}}}]
\tikzset{anchorbase/.style={baseline={([yshift=-0.5ex]current bounding box.center)}},
  tinynodes/.style={font=\tiny,text height=0.75ex,text depth=0.15ex},
  smallnodes/.style={font=\scriptsize,text height=0.75ex,text depth=0.15ex},
  >={Latex[length=1mm, width=1.5mm]}
}
\tikzset{
    partial ellipse/.style args={#1:#2:#3}{
        insert path={+ (#1:#3) arc (#1:#2:#3)}
    }
}
\def\Z{\mathbb{Z}}
\def\C{\mathbb{C}} 
\def\N{\mathbb{N}}
\DeclareMathOperator{\id}{id}
\definecolor{kwcolor}{rgb}{.05, .5, .3}
\definecolor{pwcolor}{rgb}{.15, .5, .5}
\definecolor{dred}{rgb}{.7, 0, 0}
\newcommand\figureXv{
	\begin{tikzpicture}
		\coordinate (O) at (0,0.1);
		\coordinate (A) at (-1/9,1/2);
		\coordinate (B) at (0,0.4);
		\coordinate (C) at (-1/9,0);
		
		\draw[style=thick] (O)--(A);
		\draw[style=thick,color=black] (B) to  (C);
	\end{tikzpicture}
}
 \title{A note on TQFTs for orientable 2-dimensional cobordisms} 
\author{Leon J. Goertz}
\address{Fachbereich Mathematik, Universit\"at Hamburg, 
Bundesstra{\ss}e 55, 
20146 Hamburg, Germany
}
\email{leon.goertz@uni-hamburg.de}
\author{Paul Wedrich}
\address{Fachbereich Mathematik, Universit\"at Hamburg, 
Bundesstra{\ss}e 55, 
20146 Hamburg, Germany
\href{https://paul.wedrich.at}{paul.wedrich.at}}
\email{paul.wedrich@uni-hamburg.de}
\begin{document}
\begin{abstract} Topological quantum field theories (TQFTs) are symmetric monoidal functors out of cobordism categories. In dimension two, oriented TQFTs are famously classified by commutative Frobenius algebras. In the unoriented setting, the classification requires additional data: an involution and a value assigned to the Möbius strip. In this work, we describe an intermediate framework that classifies 2-dimensional TQFTs for orientable cobordisms, in an appropriate sense. Our motivation arises from skein-theoretic models of surfaces embedded in 3-manifolds and Khovanov homology, where surfaces are often treated as unoriented, even though the associated 2-dimensional TQFTs themselves need not be fully unoriented.
\end{abstract}
\maketitle

\newcommand{\V}{\mathcal{V}}
\newcommand{\Cob}{\mathrm{Cob}_2}
\newcommand{\UCob}{\mathrm{UCob}_2}
\newcommand{\OblCob}{\mathrm{OCob}_2}
\newcommand{\OblCobq}{\mathrm{OCob}^{\mathrm{quot}}_2}

\section{Results}
We let $\Cob$ denote the symmetric monoidal category of oriented 2-dimensional cobordisms between oriented closed 1-manifolds and $\UCob$ the symmetric monoidal category of (unoriented) 2-dimensional cobordisms between (unoriented) closed 1-manifolds. 

\begin{defi} 
\label{def:OblCob}
The category $\OblCob$ is defined as the \emph{non-full} subcategory of $\UCob$, containing all objects and only those morphisms that are composites of \emph{orientable} cobordisms.
\end{defi}

\begin{example}
    \label{exa:Klein}
    The Klein bottle, although unorientable, is a morphism of $\OblCob$, since it can be obtained as a composite of orientable cobordisms. In particular, the set of orientable cobordisms is not closed under composition and hence does not form a subcategory of $\UCob$. Its closure under composition is $\OblCob$.
\end{example}

Our goal is to give a presentation of $\OblCob$ that places it as an intermediate object between $\Cob$ and $\UCob$ and fully classifies symmetric monoidal functors out of it, i.e. \emph{orientable TQFTs}.

\begin{thmnono} As symmetric monoidal categories, the following cobordism categories are freely generated by:
\begin{itemize}\setlength{\itemindent}{-1em}
    \item $\Cob$: a commutative Frobenius algebra \cite{MR1414088,MR2037238}.
    \item $\OblCob$: a commutative Frobenius algebra with involution; see Proposition~\ref{prop:OblCob}.
    \item $\UCob$: a commutative Frobenius algebra with involution and compatible Möbius morphism \cite{MR2253441}.
\end{itemize}
\end{thmnono}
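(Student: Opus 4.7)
My plan for the presentation of $\OblCob$ (the outer two cases being classical and cited) is to exhibit a symmetric monoidal equivalence $F : \mathcal{F}_\tau \to \OblCob$, where $\mathcal{F}_\tau$ denotes the symmetric monoidal category freely generated by a commutative Frobenius algebra $(A, m, u, \Delta, \epsilon)$ together with a compatible involution $\tau : A \to A$ (so $\tau^2 = \id$, $\tau$ is an algebra and coalgebra homomorphism, and in particular $\tau u = u$ and $\epsilon \tau = \epsilon$). On generators, $F$ sends $A \mapsto S^1$, the Frobenius operations to the standard pair of pants, reverse pair of pants, cup, and cap, and $\tau$ to the \emph{reflected cylinder}: the morphism $S^1 \times [0,1] : S^1 \to S^1$ in $\UCob$ whose outgoing boundary is identified with the target via an orientation-reversing diffeomorphism of $S^1$. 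Every generator is an orientable cobordism so $F$ factors through $\OblCob$, and each defining relation is verified geometrically; for instance, $\tau \circ m = m \circ (\tau \otimes \tau)$ follows from the pair of pants admitting a self-diffeomorphism that reflects each of its three boundary circles.

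Essential surjectivity of $F$ is clear since every object of $\OblCob$ is a tensor power of $S^1$. For fullness, every $f \in \OblCob$ is, by definition, a composite $f = f_k \circ \cdots \circ f_1$ of orientable cobordisms in $\UCob$. I would orient each $f_i$; by the presentation of $\Cob$, each oriented $f_i$ is expressible as a composite of Frobenius generators between oriented boundary circles. Between consecutive pieces, I insert a reflected cylinder on every boundary circle where the outgoing orientation of $f_i$ disagrees with the incoming orientation of $f_{i+1}$. This exhibits $f$ in the image of $F$.

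Faithfulness is the crux. I would argue it by reduction to the Turaev-Turner presentation $\mathcal{F}_{\tau,\mu} \simeq \UCob$ of \cite{MR2253441} (the free category on a commutative Frobenius algebra with involution and compatible Möbius morphism $\mu$). The inclusion of generating symbols yields a symmetric monoidal functor $\iota : \mathcal{F}_\tau \hookrightarrow \mathcal{F}_{\tau,\mu}$, and faithfulness of $F$ reduces to faithfulness of $\iota$. The key tool is a $\Z/2$-valued invariant: I would verify that every Turaev-Turner relation preserves the parity of $\mu$-occurrences --- most notably the Dyck-type relation, topologically ``three crosscaps equal one crosscap plus one handle,'' balances $\mu$-count $3 = 1$ modulo $2$. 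This parity descends to a well-defined invariant on morphisms in $\mathcal{F}_{\tau,\mu}$, corresponding topologically to the obstruction class $w_2 \in H^2(W, \partial W; \Z/2)$ whose vanishing precisely characterizes orientable-decomposability of $W$, i.e.\ membership in $\OblCob$. The parity-zero subcategory of $\mathcal{F}_{\tau,\mu}$ therefore corresponds to $\OblCob$ and coincides with the image of $\iota$.

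The main obstacle is arguing, beyond the parity bookkeeping, that no Möbius-involving derivation between two $\mu$-free composites can impose a new relation among them; an auxiliary normal-form or confluence argument may be required to reroute any such detour through $\mu$-free intermediates. If that step turns out to be awkward, a self-contained fallback is a direct Cerf-theoretic approach modeled on the classical proof for $\Cob$: decompose each orientable composite into elementary Morse pieces with tracked orientation data, and show that the Cerf moves --- births, deaths, and exchanges of critical points, together with transport of orientations across cylinders --- produce precisely the Frobenius axioms and the $\tau$-compatibility relations.
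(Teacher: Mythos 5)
Your setup is sound: the generators, the assignment of the involution generator to the reflected cylinder, essential surjectivity, and the fullness argument (orient each orientable piece, apply the $\Cob$ presentation, and insert reflected cylinders at interfaces where orientations disagree) all work, and your parity-of-$\mu$ invariant is precisely the paper's statistic $X(C)=\chi(C)-|\pi_0(\partial C)|$ modulo $2$ from Lemma~\ref{lem:charobl}. Note, however, that the relevant obstruction is this Euler-characteristic parity and \emph{not} orientability of the cobordism itself --- the Klein bottle lies in $\OblCob$ yet is non-orientable --- so your appeal to a Stiefel--Whitney class of $W$ whose ``vanishing characterizes membership in $\OblCob$'' is off. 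The genuine gap is the one you flag yourself and then leave open: faithfulness. The parity invariant identifies the image of $\iota$ inside the Turaev--Turner presentation but says nothing about injectivity on hom-sets; you must show that a chain of relations connecting two $\mu$-free words, possibly passing through $\mu$-containing intermediates, can be rerouted through involutive-Frobenius relations alone. Neither the ``a normal-form or confluence argument may be required'' remark nor the Cerf-theoretic fallback is carried out, so as written the crux of the $\OblCob$ case is unproved.

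The paper closes exactly this gap with a short Tietze-style reduction plus one explicit computation. Among the $\UCob$ relations only \eqref{eq:5} and \eqref{eq:6} involve the crosscap. By Lemma~\ref{lem:charobl}(3), crosscaps occur in pairs on each component of a morphism of $\OblCob$, and \eqref{eq:6} trades each pair for the punctured Klein bottle $K=m\circ(\phi\otimes\id)\circ\Delta\circ u$; on the crosscap-free side, \eqref{eq:6} thus acts as a definition rather than as a relation. The only residual relation that crosscap-involving derivations can impose on crosscap-free words is the image of \eqref{eq:5} under this pairing, namely \eqref{eq:7}: $m\circ(K\otimes\id)=\phi\circ(m\circ(K\otimes\id))$. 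The paper then verifies by a five-line calculation (associativity, the interchange law, the Frobenius relation, and unitality give $m\circ(K\otimes\id)=m\circ(\phi\otimes\id)\circ\Delta$; then $\phi^2=\id$ together with commutativity and cocommutativity show this expression is $\phi$-invariant) that \eqref{eq:7} is already a consequence of \eqref{eq:1}--\eqref{eq:4}. This single concrete step is what your proposal is missing; once it is supplied, no confluence analysis or Cerf theory is needed.
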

We will recall these characterizations in the oriented and unoriented cases in Propositions~\ref{prop:Cob} and \ref{prop:UCob}, respectively, and then use them to deduce the additional intermediate case. Let $\V$ be a symmetric monoidal category serving as target for our TQFTs.

\begin{cor} Evaluation at the circle $S^1$ constitutes an equivalence of symmetric monoidal categories
\[
\mathrm{Fun}^{\otimes}(\OblCob, \V) \xrightarrow{\cong} \mathrm{InvFrobAlg}(\V)
\]
from the symmetric monoidal functors $Z\colon \OblCob\to \V$ to the category $\mathrm{InvFrobAlg}(\V)$, whose objects are the involutive Frobenius algebra objects in $\V$, i.e. Frobenius algebra objects $A$ in $\V$ equipped with a Frobenius algebra morphism $\phi\colon A \to A$ such that $\phi^2=\id_A$, and whose morphisms are Frobenius algebra morphisms that intertwine the involutions.
\end{cor}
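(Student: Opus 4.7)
The plan is to deduce the corollary as a formal consequence of the presentation in Proposition~\ref{prop:OblCob}, which asserts that $\OblCob$ is the symmetric monoidal category freely generated by a commutative Frobenius algebra with involution. This is the same pattern that produces the classical classifications of $\Cob$- and $\UCob$-TQFTs from the cited presentations, and the argument transfers essentially verbatim; the work is entirely in Proposition~\ref{prop:OblCob}.

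First I would apply the universal property at the level of objects: a symmetric monoidal functor $Z\colon \OblCob\to\V$ is freely determined by the choice of the image $A := Z(S^1)$ of the generating object together with morphisms in $\V$ realizing the generating cobordisms (pair of pants, cap/cup, their duals, and the orientation-reversing but orientable twist cylinder), subject only to the defining relations of a commutative Frobenius algebra together with an involutive Frobenius-algebra endomorphism $\phi$. Evaluation at $S^1$ therefore identifies the isomorphism classes of objects in $\mathrm{Fun}^{\otimes}(\OblCob,\V)$ with those in $\mathrm{InvFrobAlg}(\V)$, with the Frobenius structure and involution on $A$ read off from the images of the generators.

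Next I would match morphisms. A monoidal natural transformation $\eta\colon Z\Rightarrow Z'$ is determined by its single component $\eta_{S^1}$, since every object of $\OblCob$ is a finite disjoint union of circles and $\eta$ is monoidal. Naturality of $\eta$ with respect to the generating 1-morphisms of $\OblCob$ translates exactly into the statements that $\eta_{S^1}$ commutes with the multiplication, unit, comultiplication, counit, and the involution — that is, $\eta_{S^1}$ is a morphism in $\mathrm{InvFrobAlg}(\V)$. Conversely, any such morphism extends uniquely to a monoidal natural transformation by monoidality and the fact that $\OblCob$ is generated, as a symmetric monoidal category, by the circle.

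Finally, I would promote the resulting equivalence of underlying categories to an equivalence of symmetric monoidal categories. The symmetric monoidal structure on $\mathrm{Fun}^{\otimes}(\OblCob,\V)$ is pointwise tensor in $\V$, while the symmetric monoidal structure on $\mathrm{InvFrobAlg}(\V)$ is the tensor product of Frobenius algebras equipped with the diagonal involution; evaluation at $S^1$ respects both strictly. The main obstacle in the overall argument is not located here but in establishing Proposition~\ref{prop:OblCob}; granted that presentation, the corollary is a purely formal application of the universal property of free symmetric monoidal categories, and no calculation beyond bookkeeping is required.
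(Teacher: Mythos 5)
Your proposal is correct and matches the paper's approach: the paper gives no separate proof of the corollary, treating it exactly as you do — a formal consequence, via the universal property of freely generated PROPs, of the presentation established in Proposition~\ref{prop:OblCob}, where all the real work lies. Your handling of morphisms (monoidal natural transformations determined by their component at $S^1$, with naturality against the generators yielding compatibility with the Frobenius structure and involution) is the standard bookkeeping the paper leaves implicit.
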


To relate TQFTs out of $\Cob$, $\OblCob$, and $\UCob$, we consider the following diagram.

\begin{equation}
    \label{eq:diag}
\begin{tikzcd}
	{\Cob} \\
	& {\OblCob} && {\mathcal{V}} \\
	{\UCob}
	\arrow["F_1"{description},hook, shift left=0,from=1-1, to=2-2]
	\arrow[from=1-1, to=2-4]
    \arrow[from=2-2, to=2-4]
	\arrow["F"{description},from=1-1, to=3-1]
	\arrow[curve={height=-8pt},"G_1"{description},shift left=0, dashed, from=2-2, to=1-1]
	\arrow["F_2"{description},hook, from=2-2, to=3-1]
	\arrow[from=3-1, to=2-4]
\end{tikzcd}
\end{equation}

Here, the left vertical arrow $F$ is the forgetful functor which forgets the orientations of 1-manifolds and 2-dimensional cobordisms. It factors through $\OblCob$, by construction. The arrows to $\V$ indicate TQFTs of the appropriate types. The functor $G_1$ will be constructed in Proposition~\ref{prop:G} and shows that every oriented 2-dimensional TQFT can be promoted to one defined on $\OblCob$ by choosing the identity as Frobenius algebra involution. 

\begin{remark}
    The functors $F, F_1, F_2$ are essentially surjective and none of them is full, although $F_2$ is faithful by definition. That $F_2$ is not full can be seen in Corollary~\ref{cor:crosscap}. Non-fullness of $F_1$ was already observed in Example~\ref{exa:Klein} and faithfulness follows from Proposition~\ref{prop:G}.
\end{remark}

The results of this note are almost certainly known to experts. Our motivation was to justify the use of commutative Frobenius algebras in the construction of skein theories for 3-manifolds based on \emph{unoriented} embedded surfaces labeled by elements of commutative Frobenius algebras---the Asaeda--Frohman--Kaiser TQFTs \cite{AsaedaFrohman, KaiserFrobAlg}---and in Bar-Natan's description \cite{MR2174270} of Khovanov homology \cite{MR1740682}. The Frobenius algebra $H^*(\C P^1)$, which is ubiquitous in these constructions, does \emph{not} provide an unoriented TQFT defined on $\UCob$ \cite{czenky2025extendedfrobeniusstructures}, but one on $\OblCob$, which is sufficient. We refer to \cite{leonsurfaceskein} for a more comprehensive survey of related literature.

\subsection*{Acknowledgements and Funding}
We thank Agustina Czenky and David Reutter for useful discussions. The authors acknowledge support from the Deutsche Forschungsgemeinschaft (DFG, German Research Foundation) under Germany's Excellence Strategy - EXC 2121 ``Quantum Universe'' - 390833306 and the Collaborative Research Center - SFB 1624 ``Higher structures, moduli spaces and integrability'' - 506632645.

\section{Proofs}

The cobordism categories $\Cob$, $\UCob$, $\OblCob$ are symmetric monoidal with respect to disjoint union and as such have objects generated by the standard circle $S^1$ (say, the unit circle in the complex plane). For $\Cob$ this is considered as equipped with the positive orientation. The standard circle equipped with the negative orientation, denoted $\overline{S^1}$, is orientation-preservingly diffeomorphic to $S^1$, e.g. by the restriction of complex conjugation. We thus may study $\Cob$, $\UCob$, $\OblCob$ as PROPs, i.e. product and permutation categories \cite{MR170925}, with objects (up to isomorphism) indexed by the natural numbers $n\in \N_0$, on which the monoidal structure acts by addition.
\smallskip

In the following we introduce notation and graphical conventions for some standard cobordisms. We read graphics from right to left, compatible with the usual convention for composing functions\footnote{The following graphics are imported from \cite{czenky2024unoriented2dimensionaltqftscategory}, but with left-right mirrored.}:
\begin{align}\label{unoriented generators}
		\begin{tikzpicture}[tqft/cobordism/.style={draw,thick}, anchorbase,
			tqft/view from=outgoing, tqft/boundary separation=30pt,
			tqft/cobordism height=40pt, tqft/circle x radius=8pt,
			tqft/circle y radius=4.5pt, tqft/every boundary component/.style={rotate=90}
			]
			\pic[tqft/cylinder,rotate=90,name=a,anchor={(1,0)}, every incoming
			boundary component/.style={draw,dotted,thick},every outgoing
			boundary component/.style={draw,thick}
			];
			\node at ([yshift=-18pt,xshift=-20pt]a-outgoing boundary 1){\small{Id}};
            \end{tikzpicture}
            \;,\; 
            \begin{tikzpicture}[tqft/cobordism/.style={draw,thick}, anchorbase,
			tqft/view from=outgoing, tqft/boundary separation=30pt,
			tqft/cobordism height=40pt, tqft/circle x radius=8pt,
			tqft/circle y radius=4.5pt, tqft/every boundary component/.style={rotate=90}
			]
			\pic[tqft/pair of pants,
			rotate=90,name=b,at=(a-outgoing boundary), every incoming
			boundary component/.style={draw,dotted,thick},every outgoing
			boundary component/.style={draw,thick}];
			\node at ([yshift=-18pt,xshift=-20pt]b-outgoing boundary 1){\small{$m$}};
            \end{tikzpicture}
             \;,\; 
            \begin{tikzpicture}[tqft/cobordism/.style={draw,thick}, anchorbase,
			tqft/view from=outgoing, tqft/boundary separation=30pt,
			tqft/cobordism height=40pt, tqft/circle x radius=8pt,
			tqft/circle y radius=4.5pt, tqft/every boundary component/.style={rotate=90}
			]
			\pic[tqft/cup, 
			rotate=90,name=d,anchor={(1,-0.5)},every incoming
			boundary component/.style={draw,dotted,thick},every outgoing
			boundary component/.style={draw,thick}];
			\node at ([yshift=-18pt,xshift=2pt]d-incoming boundary 1){\small{$u$}};
            \end{tikzpicture}
            \;,\;  
            \begin{tikzpicture}[tqft/cobordism/.style={draw,thick}, anchorbase,
			tqft/view from=outgoing, tqft/boundary separation=30pt,
			tqft/cobordism height=40pt, tqft/circle x radius=8pt,
			tqft/circle y radius=4.5pt, tqft/every boundary component/.style={rotate=90}
			]
			\pic[tqft/reverse pair of pants, name=c,rotate=90,at=(b-outgoing boundary),every incoming
			boundary component/.style={draw,thick,dotted},every outgoing
			boundary component/.style={draw,thick}];
			\node at ([yshift=-18pt,xshift=20pt]c-incoming boundary 1){\small{$\Delta$}};
            \end{tikzpicture}
             \;,\;  
            \begin{tikzpicture}[tqft/cobordism/.style={draw,thick}, anchorbase,
			tqft/view from=outgoing, tqft/boundary separation=30pt,
			tqft/cobordism height=40pt, tqft/circle x radius=8pt,
			tqft/circle y radius=4.5pt, tqft/every boundary component/.style={rotate=90}
			]
			\pic[tqft/cap, 
			rotate=90,name=e,anchor={(1,-4.3)}, every incoming
			boundary component/.style={draw,dotted,thick},every outgoing
			boundary component/.style={draw,thick}];
			\node at ([yshift=-18pt,xshift=-2pt]e-outgoing boundary 1){\small{$\varepsilon$}};
            \end{tikzpicture}
             \;,\; 
            \begin{tikzpicture}[tqft/cobordism/.style={draw,thick}, anchorbase,
			tqft/view from=outgoing, tqft/boundary separation=30pt,
			tqft/cobordism height=40pt, tqft/circle x radius=8pt,
			tqft/circle y radius=4.5pt, tqft/every boundary component/.style={rotate=90}
			]
			\pic[tqft/cylinder to next,rotate=90,name=k, anchor=incoming boundary 1,boundary separation=60pt, anchor={(1.07,-5.8)},every incoming
			boundary component/.style={draw,dotted,thick},every outgoing
			boundary component/.style={draw,thick}];
			\pic[tqft/cylinder to prior,rotate=90, anchor={(0.6,-5.8)},boundary separation=60pt,every incoming
			boundary component/.style={draw,dotted,thick},every outgoing
			boundary component/.style={draw,thick}];
			\node at ([yshift=-50pt,xshift=-18pt]k-outgoing boundary 1){\small{$\tau$}};
            \end{tikzpicture}
             \;,\;  
            \begin{tikzpicture}[tqft/cobordism/.style={draw,thick}, anchorbase,
			tqft/view from=outgoing, tqft/boundary separation=30pt,
			tqft/cobordism height=40pt, tqft/circle x radius=8pt,
			tqft/circle y radius=4.5pt, tqft/every boundary component/.style={rotate=90}
			]
            \pic[tqft/cylinder,rotate=90,name=k,anchor={(1,-7.3)},every incoming
			boundary component/.style={draw,dotted,thick},every outgoing
			boundary component/.style={draw,thick}];
			\node at ([yshift=-18pt,xshift=-20pt]k-outgoing boundary 1){\small{$\phi$}};
			\node at ([xshift=20pt]k-incoming boundary 1){$\leftrightarrow$};
            \end{tikzpicture}
             \;,\;  
            \begin{tikzpicture}[tqft/cobordism/.style={draw,thick}, anchorbase,
			tqft/view from=outgoing, tqft/boundary separation=30pt,
			tqft/cobordism height=40pt, tqft/circle x radius=8pt,
			tqft/circle y radius=4.5pt, tqft/every boundary component/.style={rotate=90}
			]
			\pic[tqft/cap, 
			rotate=270,name=d,at=(a-outgoing boundary),every incoming
			boundary component/.style={draw,dotted,thick},every outgoing
			boundary component/.style={draw,thick}];
			\node at ([xshift=7pt]d-outgoing boundary 1) {$\figureXv$};
			\node at ([xshift=10pt]d-outgoing boundary 1) {.};
			\node at ([yshift=-18pt,xshift=2pt]d-outgoing boundary 1){\small{$\theta$}};
		\end{tikzpicture}
	\end{align}
    The first five are well-known. The transposition $\tau$ is useful when describing cobordism categories as freely generated as monoidal categories and for expressing the (co)commutativity of the (co)multiplication, see below. When generating as symmetric monoidal category, $\tau$ does not need to be named as generator. The cobordism $\phi$ is a cylinder over the circle $S^1$, but with orientation-reversing source- and target identifications. Note that it is an orientable cobordism. The symbol $\theta$ refers to the Möbius band (or punctured $\mathbb{R}P^2$), considered as an unorientable cobordism from the empty 1-manifold to the circle $S^1$.
    \smallskip

We recall the presentation of $\Cob$ underlying the famous classification of oriented 2-dimensional TQFTs \cite{MR1414088,MR2037238}.
\begin{prop}
\label{prop:Cob} The oriented cobordism category $\Cob$ is generated as a PROP by the morphisms $(m,u,\Delta,\varepsilon)$ subject to the relations:
\begin{itemize}
\item The morphisms $(m,u)$ define a unital, associative, and commutative algebra:
\begin{equation}
\label{eq:1}
m\circ(u\otimes \id)=\id=m\circ(\id\otimes u)\;,\quad
m\circ(m\otimes \id)=m\circ(\id\otimes m)\;,\quad
m\circ \tau=m
\end{equation}
\item The morphisms $(\Delta,\varepsilon)$ define a counital, coassociative, and cocommutative coalgebra:
\begin{equation}
\label{eq:2}
(\varepsilon\otimes \id)\circ \Delta=\id=(\id\otimes \varepsilon)\circ\Delta\;,\quad
(\Delta\otimes \id)\circ \Delta=(\id\otimes \Delta)\circ\Delta\;,\quad
\tau\circ \Delta=\Delta
\end{equation}
\item The Frobenius relations hold:
\begin{equation}
\label{eq:3}
(\id \otimes m) \circ (\Delta\otimes \id) = \Delta \circ m = (m\otimes \id)\circ (\id \otimes \Delta)
\end{equation}
In other words, $\Cob$ is the PROP freely generated by a commutative Frobenius algebra object.
\end{itemize}
\end{prop}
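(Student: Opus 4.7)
The plan is to verify the presentation in two parts: first, that the listed relations hold in $\Cob$, and second, that they are complete.

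For the first part, each relation corresponds to an explicit orientation-preserving diffeomorphism between composites of elementary cobordisms relative to their boundaries. The unit and (co)associativity relations follow from standard diffeomorphisms: gluing a disk to one leg of a pair of pants yields a cylinder, and the two ways to build a four-holed sphere from pairs of pants produce diffeomorphic cobordisms. (Co)commutativity reflects the self-diffeomorphism of the pair of pants swapping its two input boundary circles. The Frobenius relations come from two different pants decompositions of the genus-zero surface with two input and two output boundaries, both of which realize the same connected surface.

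For the second part, I would invoke Morse theory on a cobordism $W\colon m \to n$: choose a self-indexing Morse function $f\colon W \to [0,1]$ with distinct critical values such that $f^{-1}(0)$ and $f^{-1}(1)$ are the source and target 1-manifolds. Slicing $W$ at regular values between successive critical values decomposes $W$ into elementary cobordisms, one per critical point. Index-$0$ critical points contribute a cap $u$, index-$2$ critical points contribute a cup $\varepsilon$, and index-$1$ critical points contribute either a multiplication $m$ (if the attaching $1$-handle joins two components) or a comultiplication $\Delta$ (if it splits one component); these may be tensored with identity cylinders and composed with transpositions $\tau$ to account for the ordering of boundary circles. Hence $(m,u,\Delta,\varepsilon)$ generate $\Cob$ as a PROP.

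The main obstacle is completeness of the relations, for which I would use the normal form strategy of Abrams and Kock. On the algebraic side, one shows that any morphism in the free PROP on a commutative Frobenius algebra admits a normal form parametrized by a permutation together with, for each connected component of the underlying "string diagram", an ordered triple $(\text{inputs}, \text{outputs}, \text{genus})$ realized by a standard "genus-$g$ surface with $m$ inputs and $n$ outputs" built from $m$ multiplications, $n$ comultiplications, and $g$ handles $m \circ \Delta$. The Frobenius relations allow handles to be slid past (co)multiplications and absorbed, while (co)associativity and (co)commutativity trivialize the combinatorics of planar pants trees. On the geometric side, the classification of compact oriented surfaces with boundary states that any such surface is determined up to diffeomorphism rel boundary by the partition of boundary components and the genus of each component. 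Matching these two parametrizations, the surjective PROP homomorphism from the free PROP to $\Cob$ is bijective on morphisms, yielding the claimed presentation.
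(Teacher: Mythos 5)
Your proposal is correct and follows the standard Abrams--Kock argument: Morse-theoretic handle decomposition for generation, explicit diffeomorphisms for the validity of the relations, and a normal form matched against the classification of compact oriented surfaces with boundary for completeness. The paper does not prove this proposition itself but simply cites \cite{MR1414088,MR2037238}, which is exactly the argument you sketch, so there is no substantive divergence.
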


Illustrations of these and following relations can be found in \cite{MR2253441,czenky2024unoriented2dimensionaltqftscategory}.

\begin{prop}
\label{prop:UCob}
    The unoriented cobordism category $\UCob$ is generated as a PROP by the morphisms $(m,u,\Delta,\varepsilon,\phi, \theta)$ subject to the relations \eqref{eq:1}, \eqref{eq:2}, \eqref{eq:3} and additionally:
    \begin{itemize}
        \item The morphism $\phi$ is a Frobenius algebra involution:
        \begin{equation}
        \label{eq:4}
   \phi^2=\id \;,\quad
    m\circ(\phi\circ\phi)=\phi\circ m\;,\quad
   \phi\circ u = u\;,\quad
    \Delta \circ\phi = (\phi\otimes\phi)\circ \Delta\;,\quad
    \varepsilon\circ\phi=\varepsilon
    \end{equation}
    \item The two extended Frobenius relations hold:
    \begin{align}
        \label{eq:5}
        m\circ (\theta\otimes \id) &= \phi \circ (m\circ (\theta\otimes \id))\\
        \label{eq:6}
        m\circ (\theta\otimes \theta) &=  m\circ (\phi\otimes \id) \circ \Delta \circ u
    \end{align}
    \end{itemize}
\end{prop}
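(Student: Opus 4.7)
The plan is to adapt the standard proof of Proposition~\ref{prop:Cob} in the style of Abrams/Kock to the unoriented setting, as was carried out in \cite{MR2253441}. The argument breaks into three parts: (a) verify that each listed relation holds in $\UCob$; (b) show that $(m,u,\Delta,\varepsilon,\phi,\theta)$ generate $\UCob$ as a PROP; and (c) establish completeness of the relations.

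For (a), each relation corresponds to an explicit diffeomorphism rel boundary between two prescribed cobordisms. The relations \eqref{eq:1}--\eqref{eq:3} carry over from $\Cob$ after forgetting orientations. The involution relations \eqref{eq:4} reflect that $\phi$ is realized by an orientation-reversing self-diffeomorphism of the cylinder whose square is isotopic to the identity and which is compatible with the pair-of-pants and cup/cap. Relation \eqref{eq:5} expresses that a Möbius band has a single boundary circle, so its multiplication via $m$ is invariant under flipping that boundary. Relation \eqref{eq:6}, the Dyck-type identity, encodes the classical fact that two Möbius bands glued into a pair of pants form a once-punctured Klein bottle, which is diffeomorphic to the connected sum of a punctured sphere with a once-twisted handle. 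For (b), I would pick a generic Morse function on any cobordism $W$ and decompose $W$ into elementary pieces around index-$0,1,2$ critical points (yielding $u$, $m$ or $\Delta$, and $\varepsilon$) interleaved with cylindrical regions. Cylindrical pieces whose boundary identifications are orientation-reversing contribute factors of $\phi$, while any crosscap appearing in $W$ contributes a factor of $\theta$.

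Part (c), completeness, is the main obstacle. The most direct route is via Cerf theory: any two Morse decompositions of a fixed cobordism are connected by a generic one-parameter family whose singularities are births/deaths of cancelling critical pairs, critical-point exchanges, and handle slides, together with possible local orientation changes on cylindrical pieces. Each such move translates into an instance of the listed relations: Morse moves on orientable regions reproduce \eqref{eq:1}--\eqref{eq:3}; local orientation flips and their compatibilities yield \eqref{eq:4}; and \eqref{eq:5}--\eqref{eq:6} are precisely what is needed to trade crosscaps against handles and $\phi$-twists. An equivalent strategy is a normal-form argument: every composite of generators can be reduced via the relations to a canonical representative indexed by the boundary data, genus, and crosscap number of each connected component, and the delicate step is organizing the reduction so that \eqref{eq:6} is invoked exactly when a third crosscap must absorb a pair of crosscaps into an extra handle. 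Once such normal forms exist and are unique, completeness follows from the topological classification of surfaces with boundary.
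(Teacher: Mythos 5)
The paper does not prove this proposition from scratch: its proof is a one-line citation, deferring to \cite[Section 2.2]{MR2253441} via \cite[Section 3.1]{czenky2024unoriented2dimensionaltqftscategory}. Your proposal instead outlines a direct proof, which is legitimate in principle and broadly matches the strategy actually used in the cited sources (Turaev--Turner argue via normal forms derived from the classification of compact surfaces with boundary). However, as it stands your write-up has a genuine gap: part (c), which you yourself identify as ``the main obstacle,'' is the entire mathematical content of the proposition, and you offer two alternative strategies (Cerf theory, or normal forms) without executing either. In particular, the sentence ``the delicate step is organizing the reduction so that \eqref{eq:6} is invoked exactly when a third crosscap must absorb a pair of crosscaps into an extra handle'' names precisely the step that must be carried out; one must exhibit an explicit normal form for a connected cobordism (indexed by the numbers of incoming and outgoing boundary circles, the genus, the orientability type, and the crosscap number reduced modulo the Dyck relation), verify that \eqref{eq:1}--\eqref{eq:6} suffice to bring an arbitrary word in the generators to that form, and invoke the classification of surfaces to conclude that distinct normal forms are non-diffeomorphic. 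Without this, completeness is asserted rather than proved.

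There is also a smaller but real error in part (b). On a non-orientable surface an index-$1$ critical point need not be a pair of pants: the level circle can pass through the saddle and return as a \emph{single} circle, because the descending disk can have twisted normal data. This ``non-orientable saddle'' is exactly the elementary cobordism $m\circ(\theta\otimes\id)\colon S^1\to S^1$, i.e. a Möbius band summed into a cylinder. So crosscaps do not appear as separate Morse pieces that ``contribute a factor of $\theta$''; rather, the index-$1$ pieces come in two flavours, and $\theta$ itself (a cobordism $\emptyset\to S^1$ with a single critical point of index $0$ followed by such a non-orientable saddle capped off) must be extracted from them. Relatedly, your justification of \eqref{eq:5} is off target: the content of that relation is not that the Möbius band ``has a single boundary circle,'' but that a boundary circle of a non-orientable component can be slid through the crosscap and returned with reversed orientation, which is why post-composition with $\phi$ acts trivially there. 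These points are all repairable, but they are exactly the places where the unoriented case differs from Proposition~\ref{prop:Cob}, so they cannot be waved through by analogy with the oriented argument.
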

\begin{proof}
    As explained in \cite[Section 3.1]{czenky2024unoriented2dimensionaltqftscategory}, this follows from \cite[Section 2.2]{MR2253441}.
\end{proof}
Related results in other contexts appear in \cite{MR1424641, MR2305607, MR2026879}. 

\begin{example}
    The expression $m\circ (\phi\otimes \id) \circ \Delta \circ u$ on the right-hand side of \eqref{eq:6} presents a punctured Klein bottle as composite of orientable cobordisms, as stated in Example~\ref{exa:Klein}.
\end{example}

\begin{lemma}\label{lem:charobl} Let $\Sigma$ be a 2-dimensional cobordism in $\UCob$. The following conditions on $\Sigma$ are equivalent:
\begin{enumerate}[label = (\arabic*)]
    \item $\Sigma$ is contained in the subcategory $\OblCob$.
    \item For every connected component $C\in \pi_0(\Sigma)$, we have $\chi(C)-|\pi_0(\partial C)| \in 2\Z$.
    \item In the presentation of Proposition~\ref{prop:UCob}, every connected component $C\in \pi_0(\Sigma)$ contains an even number of crosscaps.
    \item In the presentation of Proposition~\ref{prop:UCob}, the cobordism $\Sigma$ can be written as a composite without crosscaps.
\end{enumerate}
\end{lemma}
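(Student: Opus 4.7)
The plan is to close the loop of implications $(4) \Rightarrow (1) \Rightarrow (2) \Leftrightarrow (3) \Rightarrow (4)$. The implication $(4) \Rightarrow (1)$ is immediate from Definition~\ref{def:OblCob}, since the generators $m, u, \Delta, \varepsilon, \phi$ of Proposition~\ref{prop:UCob} are all orientable cobordisms. The equivalence $(2) \Leftrightarrow (3)$ follows from the classification of compact connected surfaces with boundary: an orientable component of genus $g$ with $b$ boundary circles satisfies $\chi - b = 2 - 2g - 2b \in 2\Z$ and has zero crosscaps, while a non-orientable component with $k$ crosscaps and $b$ boundaries satisfies $\chi - b = 2 - k - 2b$, which is even precisely when $k$ is.

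For $(1) \Rightarrow (2)$ I would induct on the number of orientable pieces in a presentation of $\Sigma$ as a composite in $\OblCob$. The base case of a single orientable cobordism follows from the classification above. For the inductive step, write $\Sigma = \Sigma_2 \circ \Sigma_1$ where $\Sigma_2$ is a single orientable cobordism and $\Sigma_1$ is a composite in $\OblCob$ of fewer pieces. A connected component $C \subset \Sigma$ is assembled from components $\{C_i^{(1)}\}$ of $\Sigma_1$ and $\{C_j^{(2)}\}$ of $\Sigma_2$ glued along some number $n$ of circles in the intermediate 1-manifold. Since each glued circle has vanishing Euler characteristic and is removed from the boundary of both sides,
\[
\chi(C) - |\pi_0(\partial C)| = \sum_i \bigl(\chi(C_i^{(1)}) - |\pi_0(\partial C_i^{(1)})|\bigr) + \sum_j \bigl(\chi(C_j^{(2)}) - |\pi_0(\partial C_j^{(2)})|\bigr) + 2n,
\]
which is even by the inductive hypothesis on $\Sigma_1$ and the base case on $\Sigma_2$.

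The main obstacle is $(3) \Rightarrow (4)$, which I would establish by constructing a $\theta$-free decomposition of $\Sigma$ directly, one connected component at a time. By the classification of surfaces, a connected cobordism $C$ with $b$ boundary circles is determined up to diffeomorphism by its Euler characteristic together with its orientability class. If $C$ is orientable of genus $g$, it can be built using pairs of pants, cups, caps, and $g$ orientable handle attachments of the form $m \circ \Delta$, none of which involves $\theta$. If $C$ is non-orientable with $2r$ crosscaps, I would replace $r$ of the orientable handle attachments by twisted ones of the form $m \circ (\phi \otimes \id) \circ \Delta$, each contributing a Klein-bottle summand (two crosscaps) in place of a torus summand. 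The resulting decomposition uses only the generators $m, u, \Delta, \varepsilon, \phi$ of $\OblCob$. The technical subtlety is to verify that the twisted handle $m \circ (\phi \otimes \id) \circ \Delta$ indeed realizes a Klein-bottle attachment; this follows from the extended Frobenius relation~\eqref{eq:6}, which identifies the closure of the twisted handle with two merged crosscaps, together with the classical relation $T^2 \# \R P^2 \cong K \# \R P^2$ in the topological classification of surfaces, which guarantees that mixing twisted and untwisted handles realizes every non-orientable surface of even crosscap number.
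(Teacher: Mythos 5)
Your overall architecture --- the cycle $(4)\Rightarrow(1)\Rightarrow(2)\Leftrightarrow(3)\Rightarrow(4)$ --- matches the paper's, and three of the four steps are correct. Your $(4)\Rightarrow(1)$ and $(1)\Rightarrow(2)$ (base case via the classification of surfaces, inductive step via the gluing formula for $\chi(-)-|\pi_0(\partial-)|$) are fine and amount to the paper's observation that the statistic $X(-)=\chi(-)-|\pi_0(\partial -)|$ is even on orientable pieces and has parity additive under composition. Your $(2)\Leftrightarrow(3)$ is also essentially the paper's parity count (the paper phrases it as: the crosscap is the only generator with odd $X$), though your formula $\chi-b=2-k-2b$ should read $2-2g-k-2b$ when handles are also present; the parity conclusion is unaffected.

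The genuine gap is in $(3)\Rightarrow(4)$. You build, for each connected component, \emph{some} crosscap-free composite with the correct underlying surface, and then identify it with the given component by asserting that ``a connected cobordism $C$ with $b$ boundary circles is determined up to diffeomorphism by its Euler characteristic together with its orientability class.'' That is true for abstract surfaces but false for morphisms of $\UCob$: a cobordism carries parametrized incoming and outgoing boundary circles, and two cobordisms are equal only if there is a diffeomorphism compatible with these parametrizations. Already the two orientable cylinders $\id$ and $\phi$ from $S^1$ to $S^1$ have the same Euler characteristic, orientability class, and boundary data, yet are distinct morphisms (otherwise $\phi$ would not be needed as a generator). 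For the non-orientable components that actually concern you, the statement ``topological type plus in/out partition determines the morphism'' does hold, but proving it is precisely the content of the normal-form theorem behind Proposition~\ref{prop:UCob}; invoking the bare classification of surfaces does not supply it, and your appeal to $T^2 \# \R P^2 \cong K \# \R P^2$ again only controls the underlying surface. The paper sidesteps all of this by working inside the presentation: given a word with an even, non-zero number of crosscaps on a component, the relations (in particular \eqref{eq:5} and the Frobenius relations) let one bring two crosscaps together, and relation \eqref{eq:6} then replaces the merged pair by the punctured Klein bottle $m\circ(\phi\otimes\id)\circ\Delta\circ u$, which is $\theta$-free. To repair your argument, either adopt that rewriting, or explicitly invoke the classification of \emph{unoriented cobordisms} (not just surfaces) from \cite{MR2253441} when identifying your constructed composite with the given morphism.
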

\begin{proof}
    Note that the statistic $X(-):=\chi(-)-|\pi_0(\partial -)|$ is additive under disjoint union and evaluates to an even number for every orientable cobordism. In particular, it does so for every connected component of an orientable cobordism. The parity is, furthermore, additive under composition, so that every morphism of $\OblCob$ satisfies (2). In Proposition~\ref{prop:UCob} the crosscap is the only generator with odd statistic $X$ and so the parity of $X$ equals the parity of the number of crosscaps. We conclude that (2) implies (3). If a connected component contains a non-zero even number of crosscaps, these can be rewritten in pairs as punctured Klein bottles by relation \eqref{eq:6}, so that (3) implies (4). Since all generators except the crosscap are orientable, (4) implies (1). 
\end{proof}

\begin{cor}\label{cor:crosscap}
The crosscap is not in $\OblCob$ and hence $F_2\colon \OblCob \to \UCob$ is not full.
\end{cor}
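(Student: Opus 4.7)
The plan is to invoke Lemma~\ref{lem:charobl} directly, using the simplest available invariant.

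First I would compute the statistic $X(\theta) = \chi(\theta) - |\pi_0(\partial \theta)|$ for the Möbius band $\theta$. Since $\theta$ deformation retracts to its core circle, $\chi(\theta) = 0$, and its boundary is a single circle, so $|\pi_0(\partial \theta)| = 1$. Hence $X(\theta) = -1$, which is odd. As $\theta$ is connected, it therefore fails condition~(2) of Lemma~\ref{lem:charobl}, so by the equivalence (1)$\Leftrightarrow$(2) proved there, $\theta$ is not a morphism of $\OblCob$.

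To conclude non-fullness of $F_2$, I would observe that both the source $\emptyset$ and the target $S^1$ of $\theta$ are objects of $\OblCob$ (the inclusion $F_2$ is the identity on objects, as $\OblCob$ is defined as a non-full subcategory containing all objects of $\UCob$). Thus $\theta \in \Hom_{\UCob}(\emptyset, S^1)$ is a morphism between objects in the image of $F_2$ that does not lie in the image of $F_2$ on hom-sets, witnessing that $F_2$ is not full.

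There is no real obstacle here: the corollary is essentially immediate from Lemma~\ref{lem:charobl}, the only content being the one-line parity calculation $X(\theta)=-1$.
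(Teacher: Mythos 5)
Your proposal is correct and matches the paper's own proof, which likewise observes that the statistic $X$ is odd for the crosscap and invokes Lemma~\ref{lem:charobl}; you have merely spelled out the computation $X(\theta)=0-1=-1$ and the identity-on-objects remark explicitly. No issues.
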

\begin{proof}
    The parity of $X$ is odd for the crosscap, so it is not in $\OblCob$ by Lemma~\ref{lem:charobl}.
\end{proof}

We can now provide a presentation for $\OblCob$. Here we write $K:=m\circ (\phi\otimes \id) \circ \Delta \circ u$ for the punctured Klein bottle.
\begin{prop}
\label{prop:OblCob}
The orientable cobordism category $\OblCob$ is generated as a PROP by the morphisms $(m,u,\Delta,\varepsilon,\phi)$ subject to only\footnote{In particular, no analogue of relation \eqref{eq:5} is required.} the relations \eqref{eq:1}, \eqref{eq:2}, \eqref{eq:3}, \eqref{eq:4}.
\end{prop}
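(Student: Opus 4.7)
My plan is to exhibit an isomorphism $\Phi\colon \mathcal{P} \xrightarrow{\sim} \OblCob$ of PROPs, where $\mathcal{P}$ denotes the PROP freely generated by $(m,u,\Delta,\varepsilon,\phi)$ modulo the relations \eqref{eq:1}--\eqref{eq:4}. The functor $\Phi$ sends each generator to the cobordism of the same name. It is well defined because these relations already hold in $\UCob$ by Proposition~\ref{prop:UCob}, and hence in the subcategory $\OblCob$. Fullness of $\Phi$ is immediate from Lemma~\ref{lem:charobl}(4): every morphism of $\OblCob$ admits a crosscap-free expression in the $\UCob$-generators, which is precisely an expression in the image of $\Phi$.

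The main task is faithfulness. Since $F_2\colon \OblCob \hookrightarrow \UCob$ is faithful by construction, it suffices to show the composite $\mathcal{P} \to \UCob$ is faithful. By Proposition~\ref{prop:UCob}, $\UCob$ may be identified with the PROP obtained from $\mathcal{P}$ by adjoining the generator $\theta$ and the relations \eqref{eq:5}, \eqref{eq:6}. Thus faithfulness reduces to \emph{conservativity}: every equation among $\theta$-free words in $(m,u,\Delta,\varepsilon,\phi)$ that holds in $\UCob$ must already be derivable from \eqref{eq:1}--\eqref{eq:4}.

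To prove conservativity I would perform a normal-form analysis in $\mathcal{P}$. Using \eqref{eq:1}--\eqref{eq:4} any morphism of $\mathcal{P}$ can be rewritten as a tensor product of connected pieces, and each piece brought into a canonical form assembled from (i) a standard $\Cob$-style genus-$g$ surface with $p$ inputs and $q$ outputs built from $(m,u,\Delta,\varepsilon)$ as in Proposition~\ref{prop:Cob}, (ii) at most one copy of the punctured Klein bottle $K = m\circ(\phi\otimes\id)\circ\Delta\circ u$, whose presence renders the piece non-orientable and accounts for a pair of crosscaps via \eqref{eq:6}, and (iii) a $\phi$-decoration on the boundary circles. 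The relations \eqref{eq:4} permit transporting $\phi$'s past $m,\Delta,u,\varepsilon$ and combining them modulo $\phi^2=\id$; the oriented Frobenius relations \eqref{eq:1}--\eqref{eq:3} handle the orientable part exactly as in the classical presentation of $\Cob$. A short calculation from \eqref{eq:4} and cocommutativity yields the derived identity $\phi\circ K = K$, so on any non-orientable piece every boundary $\phi$-decoration can be absorbed. On orientable pieces, the $\phi$-decoration is an element of $(\Z/2)^{p+q}$ modulo the global flip (realizable inside $\mathcal{P}$ via identities such as $\phi\circ m\circ(\phi\otimes\phi)=m$), which is precisely the data of boundary parameterizations relative to a chosen orientation of the surface. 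Comparing with the classification of surfaces in $\OblCob$ (constrained by the parity statistic of Lemma~\ref{lem:charobl}(2)) then shows that two distinct canonical forms correspond to distinct cobordisms in $\UCob$, completing faithfulness.

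The main obstacle will be the careful bookkeeping of this normal-form reduction — in particular, verifying uniformly that on non-orientable components every $\phi$ is absorbed through iterated applications of $\phi\circ K = K$ together with the propagation rules in \eqref{eq:4}, and that on orientable components the global-flip ambiguity of the $\phi$-decoration is realized as an internal equation in $\mathcal{P}$. Once this is set up, the matching with the topological classification of surfaces is routine.
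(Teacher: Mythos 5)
Your setup (well-definedness from Proposition~\ref{prop:UCob}, fullness from Lemma~\ref{lem:charobl}(4), reduction of faithfulness to conservativity of the extension by $\theta$) is fine, but the proposal stops exactly where the proof begins: the normal-form reduction that is supposed to establish conservativity is only described, not carried out, and it is the entire mathematical content of the statement. Two concrete points. First, your mechanism for killing boundary $\phi$-decorations on a non-orientable component --- ``iterated applications of $\phi\circ K=K$ together with the propagation rules in \eqref{eq:4}'' --- does not work as stated: pushing $\phi$ through $m$ via $\phi\circ m = m\circ(\phi\otimes\phi)$ turns one $\phi$ into two, and while the copy landing on $K$ is absorbed by $\phi\circ K=K$, the other copy sits on a strand that $K$ does not touch. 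To absorb a $\phi$ on a boundary circle \emph{other} than the one carrying $K$ you must first transport $K$ there using the Frobenius relation \eqref{eq:3}; concretely one needs the identity $m\circ(K\otimes\id)=\phi\circ \left(m\circ(K\otimes\id)\right)$, which is precisely the relation the paper isolates as \eqref{eq:7} and derives in two displayed computations --- that derivation is essentially the whole proof in the paper. Second, your normal form ``standard genus-$g$ surface plus at most one $K$'' additionally requires an in-$\mathcal{P}$ derivation that two punctured Klein bottles on one component equal one punctured Klein bottle plus a handle (the algebraic shadow of the diffeomorphism between $N_2\# N_2$ and $N_2\# T^2$); without it, words are not forced into a unique normal form, and you give no indication of how to obtain this from \eqref{eq:1}--\eqref{eq:4}.

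By contrast, the paper avoids the normal-form analysis altogether: having presented $\UCob$ as in Proposition~\ref{prop:UCob}, it argues that on crosscap-free words the only trace of the extra data $\theta$ and relations \eqref{eq:5}--\eqref{eq:6} is the single identity \eqref{eq:7}, and then checks that this identity already follows from \eqref{eq:1}--\eqref{eq:4}. If you want to pursue your route, be aware that it amounts to reproving the classification of those morphisms of $\UCob$ lying in $\OblCob$ essentially from scratch, and that each reduction step above must be exhibited as an explicit consequence of \eqref{eq:1}--\eqref{eq:4}.
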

\begin{proof}
   The morphisms $(m,u,\Delta,\varepsilon,\phi)$ generate $\OblCob$ by Lemma~\ref{lem:charobl}.(4). Next, we recall that every pair of crosscaps on a connected component can be re-expressed by \eqref{eq:6}, which now reads $m\circ (\theta\otimes \theta)=K$ as punctured Klein bottle. The only other relation in $\UCob$ concerning the crosscap is \eqref{eq:5}, which, informally stated, says that an orientation reversal near a boundary component can be absorbed in the presence of a crosscap on the same component. However, according to Lemma~\ref{lem:charobl}.(3), any morphism in $\OblCob$ where such relation could be applied has at least two crosscaps on the same component, i.e. punctured Klein bottle. The remnant of \eqref{eq:5} in $\OblCob$ is, thus, the relation:
    \begin{equation}
        \label{eq:7}
        m\circ (K\otimes \id) = \phi \circ (m\circ (K\otimes \id))
   \end{equation}
    To prove the proposition, we show that \eqref{eq:7} is already a consequence of \eqref{eq:1}--\eqref{eq:4}. We start by re-expressing the twice-punctured Klein bottle:
    \begin{align*}
    m\circ (K\otimes \id)
    &\overset{def}{=} m\circ ((m\circ (\phi\otimes \id) \circ \Delta \circ u))\otimes \id)\\
    &=
    m \circ (m\otimes \id) \circ (\phi \otimes \id^2) \circ (\Delta \otimes \id) \circ (u\otimes \id)\\
    &\overset{a}{=}
    m \circ (\id\otimes m) \circ (\phi \otimes \id^2) \circ (\Delta \otimes \id) \circ (u\otimes \id)\\
    &\overset{\otimes}{=}
    m \circ (\phi \otimes \id) \circ (\id\otimes m) \circ (\Delta \otimes \id) \circ (u\otimes \id)\\
    &\overset{F}{=}
    m \circ (\phi \otimes \id) \circ \Delta \circ m \circ (u\otimes \id)\\
    &\overset{u}{=}
    m \circ (\phi \otimes \id) \circ \Delta 
    \end{align*}
    Post-composing the expression $m \circ (\phi \otimes \id) \circ \Delta $ instead of  $m\circ (K\otimes \id)$ with $\phi$ now yields:
    \begin{align*}
    \phi \circ (m \circ (\phi \otimes \id) \circ \Delta )
    &\overset{\phi}{=} m \circ (\phi^2 \otimes \phi) \circ \Delta
    \\
      &\overset{\phi}{=}m \circ (\id \otimes \phi) \circ \Delta \\
     &\overset{\tau}{=}
     m \circ \tau\circ (\phi \otimes \id)\circ \tau \circ \Delta \\
      &\overset{m,\Delta}{=}
     m \circ (\phi \otimes \id)\circ \Delta \qedhere
    \end{align*}
\end{proof}

\begin{prop}\label{prop:G}
    A symmetric monoidal functor $G_1\colon \OblCob \to \Cob$ is uniquely and well-defined by the assignment
    \[S^1\mapsto S^1\;,\quad (m,u,\Delta,\varepsilon,\phi) \mapsto (m,u,\Delta,\varepsilon,\id) \]
    and provides a section for the the forgetful functor $F_1$ from \eqref{eq:diag}, which is thus faithful.
\end{prop}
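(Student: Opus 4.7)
The plan is to invoke the presentation of $\OblCob$ established in Proposition~\ref{prop:OblCob}: since $\OblCob$ is freely generated as a PROP by $(m,u,\Delta,\varepsilon,\phi)$ modulo relations \eqref{eq:1}--\eqref{eq:4}, a symmetric monoidal functor out of $\OblCob$ is uniquely determined by the images of these generators, provided those images satisfy the same relations in the target. Uniqueness of $G_1$ is therefore automatic from the assignment; only well-definedness requires verification.

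To check well-definedness I would inspect the four relation sets in turn. Relations \eqref{eq:1}, \eqref{eq:2}, \eqref{eq:3} involve only the generators $(m,u,\Delta,\varepsilon)$ of a commutative Frobenius algebra, which by Proposition~\ref{prop:Cob} hold in $\Cob$ for the obvious targets. For \eqref{eq:4}, the image of $\phi$ is $\id_{S^1}$, and each of the five clauses reduces to a triviality: $\id^2=\id$; $m\circ(\id\otimes\id) = m = \id\circ m$; $\id\circ u = u$; $\Delta\circ\id = (\id\otimes\id)\circ\Delta$; and $\varepsilon\circ\id=\varepsilon$. Hence the assignment extends uniquely to a symmetric monoidal functor $G_1\colon\OblCob\to\Cob$.

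To see that $G_1$ is a section of $F_1$, I would observe that $F_1$ sends the oriented generators $(m,u,\Delta,\varepsilon)$ of $\Cob$ to the namesake (orientable) generators of $\OblCob$, so $G_1\circ F_1$ acts as the identity on generators of $\Cob$. As both $G_1\circ F_1$ and $\id_{\Cob}$ are symmetric monoidal functors on a PROP, agreement on generators forces $G_1\circ F_1=\id_{\Cob}$. Faithfulness of $F_1$ is then a formal consequence: if $F_1(f)=F_1(g)$, applying $G_1$ gives $f=g$.

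The proof is essentially routine once the presentation of $\OblCob$ is in hand; the only conceptual point worth flagging is that collapsing $\phi$ to $\id$ is consistent precisely because $\phi$ appears in Proposition~\ref{prop:OblCob} only through the involutivity relations \eqref{eq:4} and \emph{not} through any analogue of the extended Frobenius relation \eqref{eq:5}. This is what allows the orientable TQFTs to be degenerately regarded as oriented ones, and it is the substantive content distinguishing the orientable case from the fully unoriented case, where $\theta$ and relations \eqref{eq:5}--\eqref{eq:6} would obstruct such a section.
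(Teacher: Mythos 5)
Your proposal is correct and follows the same route as the paper, which simply notes that the assignment respects the relations of $\OblCob$ from Proposition~\ref{prop:OblCob}; you have merely spelled out the (routine) verification of relations \eqref{eq:1}--\eqref{eq:4} under $\phi\mapsto\id$, the check that $G_1\circ F_1$ is the identity on generators, and the formal deduction of faithfulness. Your closing observation---that the absence of any analogue of \eqref{eq:5} in the presentation of $\OblCob$ is exactly what permits collapsing $\phi$ to $\id$---accurately identifies the substantive point.
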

\begin{proof}
    The assignment respects the relations of $\OblCob$ from Proposition~\ref{prop:OblCob}.
\end{proof}

\renewcommand*{\bibfont}{\small}
\setlength{\bibitemsep}{0pt}
\raggedright
\printbibliography

\end{document}
\typeout{get arXiv to do 4 passes: Label(s) may have changed. Rerun}